\documentclass{article} 
\usepackage[utf8]{inputenc}
\usepackage[margin =2.4cm]{geometry}
\usepackage{amssymb}
\usepackage{amsmath}
\usepackage{amsthm}
\usepackage{enumerate} 
\usepackage{graphicx}
\usepackage{csquotes}
\usepackage{subcaption}
\usepackage{authblk}
\usepackage{tikz}

\newtheorem{theorem}{Theorem}

\newtheorem{corollary}[theorem]{Corollary}

\newtheorem{example}{Example}

\title{\LARGE{Mixed Minkowski-Covering Inequalities for \\ Convex Bodies and Lattices}}

\author{
    {\Large Aled Williams}\\
    Department of Computer Science\\
    Royal Holloway, University of London\\
    London, UK\\
    \texttt{aled.williams@rhul.ac.uk}
}

\begin{document}
	\date{}
	\maketitle

\begin{abstract}
In this paper we present a sharp mixed inequality relating successive minima and quotient covering radii of origin-symmetric convex bodies with respect to lattices. The inequality interpolates between the (classical) covering-density lower bound and the lower bound in Minkowski's second theorem. 

\vspace{1.0mm}
\noindent \textbf{Keywords}: successive minima, covering radius, covering minima, convex bodies, lattices, geometry of numbers. 
\end{abstract}


\section{Introduction and Preliminaries}
A convex set $K \subset \mathbb{R}^n$ which is compact, i.e. closed and bounded, and has a nonempty interior is called a convex body. 
Recall that a set is convex if the line segment joining any two of its points lies entirely in the set.
We denote by $\mathcal{K}^n$ the family of all convex bodies in $\mathbb{R}^n$. 
A convex body $K$ is called origin-symmetric (or $o$-symmetric) if it is centrally symmetric with respect to the origin, i.e. $\boldsymbol{x} \in K$ if and only if $-\boldsymbol{x} \in K$.  We denote by $\mathcal{K}^n_0$ the family of all $o$-symmetric convex bodies in $\mathbb{R}^n$. 

For linearly independent $\boldsymbol{b}_1, \ldots, \boldsymbol{b}_k \in \mathbb{R}^n$, the set
$$
\Lambda
=
\left\{
\sum_{i=1}^{k} x_i \boldsymbol{b}_i
:
x_i \in \mathbb{Z}
\right\}
$$
is a $k$-dimensional lattice generated by the basis
$\boldsymbol{b}_1,\ldots,\boldsymbol{b}_k$.  
If
$B = [\boldsymbol{b}_1,\ldots,\boldsymbol{b}_k] \in \mathbb{R}^{n \times k}$
is a matrix with columns $\boldsymbol{b}_1,\ldots,\boldsymbol{b}_k$, then the lattice $\Lambda = \Lambda(B)$
can be expressed equivalently as
$\Lambda = \left\{B \boldsymbol{x} : \boldsymbol{x} \in \mathbb{Z}^k\right\}.$ If $n=k$, then $\Lambda$ is a full-rank lattice. We denote by $\mathcal{L}^n$ the family of all full-rank lattices in $\mathbb{R}^n$.

For $K \in \mathcal{K}^n_0$ and $\Lambda \in \mathcal{L}^n$, the $i$-th \textit{successive minimum} $\lambda_i(K,\Lambda)$ of $K$ with respect to the lattice $\Lambda$, for $i = 1,\ldots,n$, is defined by 
$$
\lambda_i(K, \Lambda) = \min \left\{ \lambda > 0 : \operatorname{dim}(\lambda K \cap \Lambda) \ge i \right\}, 
$$
where $\operatorname{dim}(A)$ for $A \subset \mathbb{R}^n$ is the dimension of the vector space spanned by all vectors of the set $A$. In other words, the $i$-th successive minimum $\lambda_i(K,\Lambda)$ is the smallest (strictly positive) dilation factor $\lambda$ such that $\lambda K$ contains at least $i$ linearly independent lattice vectors.
Note that
\begin{equation} \label{ordering_successive}
\lambda_1(K,\Lambda)\le \cdots \le \lambda_n(K,\Lambda).
\end{equation}

\begin{example}
Let $K=[-1,1]^n$ and $\Lambda=\mathbb Z^n$, then
$\lambda_1(K,\mathbb Z^n)=\cdots=\lambda_n(K,\mathbb Z^n)=1$. This follows since $K$ already contains the $n$ standard/elementary basis vectors
$\boldsymbol{e}_1,\ldots,\boldsymbol{e}_n$, while for any positive dilation factor $\lambda<1$, the set $\lambda K$ contains no nonzero lattice points.
\end{example}

For $K \in \mathcal{K}^n$ and $\Lambda \in \mathcal{L}^n$, we define the \textit{covering radius} $\mu(K, \Lambda)$ of $K$ with respect to the lattice $\Lambda$ as 
$$\mu(K, \Lambda) = \inf\{\mu > 0 : \mu K + \Lambda 
= \mathbb{R}^n
\}.$$
Thus, the covering radius $\mu(K, \Lambda)$ is the smallest scale factor $\mu$ such that translates of $\mu K$ by lattice vectors from $\Lambda$ cover $\mathbb{R}^n$. 

\begin{example}
If $K=[-1,1]^n$ and $\Lambda=\mathbb{Z}^n$, then 
$\mu(K,\mathbb{Z}^n)=\frac12$, which follows since the translates of $\frac12K=[-\frac{1}{2},\frac12]^n$ by integer lattice points cover $\mathbb{R}^n$.
\end{example}

\begin{example}
Let $B_2^n = \{ \boldsymbol{x} \in \mathbb{R}^n : \| \boldsymbol{x} \|_2 \le 1\}$ denote the unit $\ell_2$ (Euclidean) ball in $\mathbb{R}^n$ and $\Lambda=\mathbb{Z}^n$, then 
$\mu(B_2^n,\mathbb{Z}^n) = \frac{\sqrt n}{2}$. This follows since every point in $\mathbb{R}^n$ is congruent modulo $\mathbb{Z}^n$ to a point in the $o$-symmetric unit cube $[-\frac12,\frac12]^n$, and the farthest points of this cube from the origin have $\ell_2$-norm $\sqrt n/2$.
\end{example}

In order to define the determinant of a lattice we will firstly introduce the \textit{fundamental parallelepiped} of a given lattice. Recall that $\Lambda=\Lambda(B)$ denotes the lattice generated by the basis $B = [\boldsymbol{b}_1,\ldots,\boldsymbol{b}_k] \in \mathbb{R}^{n \times k}$. The fundamental parallelepiped of the lattice $\Lambda$ associated with basis $B$ is 
$$
\mathcal P(B)
=
\{B\boldsymbol x:\boldsymbol x\in\mathbb R^k,\ 0\le x_i<1
\text{ for } i=1,\ldots,k\}.
$$

The determinant of $\Lambda$ is the $k$-dimensional volume of the fundamental parallelepiped $\mathcal P(B)$, which is calculated \cite{gover2010determinants} by
$$
\det(\Lambda)=\operatorname{vol}_{k}(\mathcal{P}(B))=\sqrt{\det\left(B^{T} B\right)}\,,
$$
where $B^T$ denotes the transpose of $B$ and $\operatorname{vol}_{k}(\cdot)$ denotes the $k$-dimensional volume or Lebesgue measure (see e.g. \cite[Chapter 13]{bartle1995elements}). If $\Lambda(B)$ is a full rank lattice (where $n = k$)
then the matrix $B$ is a square matrix and the determinant of the lattice is given by $\det(\Lambda) = | \det(B) |$, where $| \cdot |$ denotes the absolute value. 

Let \(K\in\mathcal K^n_0\) and \(\Lambda\in\mathcal L^n\). We now recall the two endpoint inequalities which motivate the main result. The first is the elementary covering-density lower bound
\begin{equation} \label{cover_density_lower}
\mu(K,\Lambda)^n \operatorname{vol}_n(K)\ge \det(\Lambda).
\end{equation}
For any $t \ge \mu(K,\Lambda)$, the translates $tK+\Lambda$ cover $\mathbb{R}^n$. Hence their lattice covering density ${\operatorname{vol}_n(tK)}/{\det(\Lambda)}$ is at least 1 (see e.g. \cite{rogers1959lattice}), and so
$$
\frac{\operatorname{vol}_n(tK)}{\det(\Lambda)}
=
\frac{t^n\operatorname{vol}_n(K)}{\det(\Lambda)}
\ge 1.
$$
Thus, setting $t = \mu(K,\Lambda)$ yields \eqref{cover_density_lower} as required. 

The second endpoint inequality is the lower bound in Minkowski's second theorem on successive minima (see e.g. \cite[Chapter 9]{gruber1987geometry}). The full theorem states that
$$
\frac{2^n}{n!}\det(\Lambda)
\le
\left(\,\prod_{i=1}^n \lambda_i(K,\Lambda)\right)
\operatorname{vol}_n(K)
\le
2^n\det(\Lambda).
$$
In particular, the motivating inequality is
\begin{equation}\label{minkowski_lower}
\prod_{i=1}^n \lambda_i(K,\Lambda)
\ge
\frac{2^n}{n!}
\frac{\det(\Lambda)}{\operatorname{vol}_n(K)}.
\end{equation}

The result below interpolates between \eqref{cover_density_lower} and \eqref{minkowski_lower} by replacing the last $n-r$ successive minima with a quotient covering radius.


\section{Main Results}
We now state the main result. The statement is most naturally formulated in terms of a quotient (or projected) covering radius, which we define below. The corresponding inequalities for the ordinary covering radius and for covering minima follow as immediate consequences, although they are in general weaker.

Kannan and Lov{\'a}sz \cite{KannanLovasz} introduced covering minima, and several Minkowski-type inequalities for them have since been studied (see e.g. \cite{BetkeHenkWills,schnell1995minkowski,merino2017densities}). The result below is close in spirit to mixed inequalities relating successive minima and radii, such as those in \cite{henk2009successive}, however, the transverse term depends on a quotient covering radius. Related projection-based interpolation phenomena occur in the theory of packing minima \cite{HenkSchymuraXue}. To the best of our knowledge, the mixed successive-minima/projected-covering inequality below has not appeared previously.

Let $K\in\mathcal K^n_0$ and let $\Lambda\in\mathcal L^n$. For $0\le r\le n$, choose lattice vectors
$v_1,\ldots,v_r\in\Lambda$ recursively such that
$$
v_i\in \lambda_i(K,\Lambda)K
\quad\text{and}\quad
v_1,\ldots,v_i \text{ are linearly independent}
$$
for every \(1\le i\le r\). Note that such a selection is possible by the definition of the successive minima. Let
$$
L_r=\operatorname{lin}(v_1,\ldots,v_r) \subset \mathbb R^n,
$$
where $\operatorname{lin}(A)$ denotes the linear span of $A$, with the convention that $L_0 = \{0\}$.

Let
$\pi_{L_r}:\mathbb R^n\to L_r^\perp$ denote the orthogonal projection. We define 
$$
K_{L_r}=\pi_{L_r}(K)
$$
to be the projected convex body,
$$
\Lambda_{L_r}=\pi_{L_r}(\Lambda)
$$
to be the quotient (or projected) lattice, and finally set 
$$
\rho_{L_r}=\mu(K_{L_r},\Lambda_{L_r}) \, .
$$
If $r=n$, then the projected space is zero-dimensional (i.e. $L_r^\perp=\{0\}$), and in this case $\rho_{L_r}^{n-r}=\rho_{L_r}^{0}$ is by convention equal to $1$. We also use the standard convention that an empty product is equal to $1$.

The main result is the following.

\begin{theorem}[Quotient form]\label{thm:main}
If $K\in\mathcal K^n_0$ and $\Lambda\in\mathcal L^n$, then for every $0\le r\le n$, we have
\begin{equation} \label{theorem_1_eq}
\left( \, \prod_{i=1}^r \lambda_i(K,\Lambda)\right)
\rho_{L_r}^{\,n-r}
\ge
\frac{2^r (n-r)!}{n!}
\frac{\det(\Lambda)}{\operatorname{vol}_n(K)}.
\end{equation}
\end{theorem}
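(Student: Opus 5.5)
The plan is to bound $\operatorname{vol}_n(K)$ from below by splitting volume along $L_r$ and $L_r^\perp$. The sublattice $\Lambda\cap L_r$ and the projected lattice $\Lambda_{L_r}$ satisfy
\[
\det(\Lambda)=\det(\Lambda\cap L_r)\,\det(\Lambda_{L_r}).
\]
This holds because $L_r$ is spanned by lattice vectors, so $\Lambda\cap L_r$ is a full-rank lattice in $L_r$. Apply the covering-density bound \eqref{cover_density_lower} in the $(n-r)$-dimensional space $L_r^\perp$ to the $o$-symmetric body $K_{L_r}$ and the lattice $\Lambda_{L_r}$. This gives
\[
\rho_{L_r}^{\,n-r}\operatorname{vol}_{n-r}(K_{L_r})\ge \det(\Lambda_{L_r}).
\]
It therefore suffices to show
\[
\operatorname{vol}_n(K)\ \ge\ \frac{2^r(n-r)!}{n!}\,\frac{\det(\Lambda\cap L_r)}{\prod_{i=1}^r\lambda_i(K,\Lambda)}\;\operatorname{vol}_{n-r}(K_{L_r}),
\]
and then multiply the two inequalities.

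For the sublattice factor, set $w_i=v_i/\lambda_i(K,\Lambda)\in K$. The cross-polytope $C=\operatorname{conv}\{\pm w_1,\dots,\pm w_r\}$ lies in $K\cap L_r$. Its $r$-volume is
\[
\frac{2^r}{r!}\,\frac{|\det(v_1,\dots,v_r)|_{L_r}}{\prod_{i\le r}\lambda_i}\ \ge\ \frac{2^r}{r!}\,\frac{\det(\Lambda\cap L_r)}{\prod_{i\le r}\lambda_i},
\]
since $v_1,\dots,v_r$ span a sublattice of $\Lambda\cap L_r$. So $\operatorname{vol}_r(K\cap L_r)\ge \frac{2^r}{r!}\det(\Lambda\cap L_r)/\prod_{i\le r}\lambda_i$. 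This is Minkowski's lower bound argument restricted to $L_r$.

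The key remaining step is the Rogers--Shephard-type lower bound
\[
\operatorname{vol}_n(K)\ \ge\ \binom{n}{r}^{-1}\operatorname{vol}_r(K\cap L_r)\,\operatorname{vol}_{n-r}(\pi_{L_r}K),
\]
which is exactly the factor $\frac{r!(n-r)!}{n!}$ needed. I expect this to be the main obstacle. The plan is the standard Rogers--Shephard/Spingarn argument. For $y\in\pi_{L_r}K$, let $\|y\|$ be the gauge of $\pi_{L_r}K$. Pick $x(y)\in K$ with $\pi_{L_r}x(y)=y/\|y\|$ on the boundary. By convexity, the fibre $K\cap(y+L_r)$ contains
\[
(1-\|y\|)(K\cap L_r)+\|y\|\,x(y),
\]
so its $r$-volume is at least $(1-\|y\|)^r\operatorname{vol}_r(K\cap L_r)$. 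Integrating over $y$ by Fubini gives
\[
\operatorname{vol}_n(K)\ \ge\ \operatorname{vol}_r(K\cap L_r)\int_{\pi_{L_r}K}(1-\|y\|)^r\,dy.
\]
In polar coordinates with respect to the gauge, the integral equals
\[
(n-r)\operatorname{vol}_{n-r}(\pi_{L_r}K)\int_0^1(1-t)^r t^{n-r-1}\,dt=\operatorname{vol}_{n-r}(\pi_{L_r}K)\binom{n}{r}^{-1},
\]
using the Beta integral.

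Combining the three displays gives \eqref{theorem_1_eq}. The cases $r=0$ and $r=n$ reduce to \eqref{cover_density_lower} and \eqref{minkowski_lower} respectively, by the stated conventions. Symmetry of $K$ is only used for the cross-polytope step.
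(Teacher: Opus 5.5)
Your proposal is correct and follows essentially the same route as the paper: the factorisation $\det(\Lambda)=\det(\Lambda\cap L_r)\det(\Lambda_{L_r})$, the covering-density bound in $L_r^\perp$, a Minkowski-type lower bound for $\operatorname{vol}_r(K\cap L_r)$, and the Rogers--Shephard section--projection inequality. The only difference is that you prove the last two ingredients directly, via the cross-polytope volume and a Spingarn-style fibre argument with a Beta integral. The paper instead cites Minkowski's second theorem for $K\cap L_r$ with respect to $\Lambda\cap L_r$, using $\lambda_i(K\cap L_r,\Lambda\cap L_r)\le\lambda_i(K,\Lambda)$, and cites Rogers--Shephard's theorem.
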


Theorem~\ref{thm:main} immediately implies a version stated only in terms of the covering radius. Recall that if $t \ge \mu(K,\Lambda)$, then $tK+\Lambda=\mathbb R^n$.
Upon applying the projection $\pi_{L_r}$, we obtain
$$
tK_{L_r}+\Lambda_{L_r}=L_r^\perp.
$$
In particular, the projected convex body $K_{L_r}$, dilated by the same factor $t$, covers $L_r^\perp$ under translations by the quotient lattice $\Lambda_{L_r}$. Hence, it follows that
$$
\rho_{L_r}=\mu(K_{L_r},\Lambda_{L_r})\le t.
$$
Since this holds for every $t\ge\mu(K,\Lambda)$, we obtain
$$
\rho_{L_r}\le \mu(K,\Lambda).
$$
Therefore, Theorem~\ref{thm:main} gives the following corollary.

\begin{corollary}[Covering-radius form]\label{cor:covering-radius}
If $K\in\mathcal K^n_0$ and $\Lambda\in\mathcal L^n$, then for every $0\le r\le n$, we have
\begin{equation} \label{corlloary_covering_radius_form}
\left(\prod_{i=1}^r \lambda_i(K,\Lambda)\right)
\mu(K,\Lambda)^{\,n-r}
\ge
\frac{2^r (n-r)!}{n!}
\frac{\det(\Lambda)}{\operatorname{vol}_n(K)}.
\end{equation}
\end{corollary}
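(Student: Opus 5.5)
We need a proof plan for the stated Corollary. It follows from Theorem~\ref{thm:main} together with the already established bound $\rho_{L_r}\le\mu(K,\Lambda)$, so the real content is a plan for Theorem~\ref{thm:main} itself. I give both.

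The plan for Theorem~\ref{thm:main} is to bound $\operatorname{vol}_n(K)$ from below by a cone/join construction, using a volume decomposition along $L_r$ and $L_r^\perp$. Write $\lambda_i=\lambda_i(K,\Lambda)$ and $L=L_r$. There are three ingredients.

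\emph{Step 1: the lattice determinant splits.} The section lattice $\Lambda\cap L$ has rank $r$, and
$$\det(\Lambda)=\det(\Lambda\cap L)\,\det(\Lambda_{L}).$$
This holds because $L$ is spanned by lattice vectors, so $\Lambda\cap L$ is a primitive sublattice and $\pi_L(\Lambda)$ is a lattice in $L^\perp$. Moreover $v_1,\dots,v_r$ generate a sublattice of $\Lambda\cap L$, so $\det(\Lambda\cap L)\le |\det[v_1,\dots,v_r]|_r$, where the right-hand side is the $r$-volume of the parallelepiped they span.

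\emph{Step 2: covering in the quotient.} Set $\rho=\rho_{L}$. By \eqref{cover_density_lower} applied in $L^\perp$,
$$\operatorname{vol}_{n-r}(K_L)\ge \det(\Lambda_L)\,\rho^{-(n-r)}.$$

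\emph{Step 3: a volume estimate combining the two directions.} The cross-polytope $C=\operatorname{conv}\{\pm v_i/\lambda_i\}\subset K\cap L$ has volume
$$\operatorname{vol}_r(C)=\frac{2^r}{r!}\,\frac{|\det[v_1,\dots,v_r]|}{\prod_{i\le r}\lambda_i}.$$
The goal is the join-type inequality
$$\operatorname{vol}_n(K)\ \ge\ \frac{r!\,(n-r)!}{n!}\,\operatorname{vol}_r(K\cap L)\,\operatorname{vol}_{n-r}(\pi_L K).$$
This is Rogers--Shephard's lower bound for sections and projections of convex bodies, and I would cite it or reprove it. To reprove it, take a point $y\in\pi_L K$ and a preimage $x\in K$; by symmetry and convexity the set $\operatorname{conv}\big((K\cap L)\cup\{x\}\big)$ lies in $K$. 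Integrating over the fibres, the fibre over $\theta y$ has $r$-volume at least $(1-\|y\|_{K_L}\theta)^r\operatorname{vol}_r(K\cap L)$ after rescaling. A polar-coordinate integral of $(1-t)^r t^{n-r-1}$ then yields the Beta factor $\frac{r!(n-r)!}{n!}$.

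\emph{Step 4: combine.} Using $\operatorname{vol}_r(K\cap L)\ge\operatorname{vol}_r(C)$ together with Steps 1 and 2,
$$\operatorname{vol}_n(K)\ge \frac{r!(n-r)!}{n!}\cdot\frac{2^r}{r!}\,\frac{\det(\Lambda\cap L)}{\prod\lambda_i}\cdot\frac{\det(\Lambda_L)}{\rho^{n-r}},$$
which rearranges to \eqref{theorem_1_eq}. The edge cases $r=0$ and $r=n$ reduce to \eqref{cover_density_lower} and \eqref{minkowski_lower} respectively.

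\emph{Main obstacle.} The hardest part is Step 3, establishing the section--projection inequality with the exact constant; that is where I would spend the effort, following Rogers--Shephard's fibre argument.

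\emph{The Corollary.} Since $\rho_{L_r}\le \mu(K,\Lambda)$ and the exponent $n-r\ge 0$, we get $\rho_{L_r}^{n-r}\le\mu(K,\Lambda)^{n-r}$. Multiplying by the nonnegative product $\prod_{i\le r}\lambda_i$ and inserting the result into \eqref{theorem_1_eq} gives \eqref{corlloary_covering_radius_form}. No step here is an obstacle.
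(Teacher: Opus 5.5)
Your proposal is correct and follows the paper's route. You obtain the Corollary exactly as the paper does, from Theorem~\ref{thm:main} and $\rho_{L_r}\le\mu(K,\Lambda)$, and your plan for the theorem uses the same four ingredients (determinant factorisation, covering density in $L^\perp$, a Minkowski-type bound on $K\cap L$, Rogers--Shephard); the one difference is cosmetic: you inline the cross-polytope bound $\operatorname{vol}_r(K\cap L)\ge \frac{2^r}{r!}\det(\Lambda\cap L)/\prod_{i\le r}\lambda_i$ instead of citing Minkowski's lower bound in $L$ with $\lambda_i(K\cap L,\Gamma)\le\lambda_i(K,\Lambda)$, which gives the same inequality \eqref{eq:section-det-bound}.
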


We also record a formulation in terms of covering minima. In particular, for $1\le d\le n$, the $d$-th covering minimum of $K$ with respect to $\Lambda$ is defined by
$$
\mu_d(K,\Lambda)
=
\max_{\pi}\mu(\pi (K),\pi(\Lambda)),
$$
where the maximum is taken over all linear projections $\pi:\mathbb R^n\to\mathbb R^d$ for which $\pi(\Lambda)$ is a lattice (see e.g. \cite{KannanLovasz,CodenottiSantosSchymura}). Since $\rho_{L_r}$ is one of the quotient covering radii appearing in this maximum, we have
$$
\rho_{L_r}\le \mu_{n-r}(K,\Lambda).
$$
Therefore, Theorem~\ref{thm:main} yields the following corollary.

\begin{corollary}[Covering-minimum form]\label{cor:covering-minima}
If $K\in\mathcal K^n_0$ and $\Lambda\in\mathcal L^n$, then for every $0\le r<n$, we have
$$
\left(\prod_{i=1}^r \lambda_i(K,\Lambda)\right)
\mu_{n-r}(K,\Lambda)^{\,n-r}
\ge
\frac{2^r (n-r)!}{n!}
\frac{\det(\Lambda)}{\operatorname{vol}_n(K)}.
$$
\end{corollary}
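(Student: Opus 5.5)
We derive the covering-minimum form directly from Theorem~\ref{thm:main}. The only new ingredient is the comparison $\rho_{L_r}\le \mu_{n-r}(K,\Lambda)$. Since $r<n$, we have $d:=n-r\ge 1$, so $\mu_{d}(K,\Lambda)$ is defined. We must check that the orthogonal projection $\pi_{L_r}$, viewed as a map into a copy of $\mathbb R^{d}$, is an admissible projection in the definition of $\mu_d$.

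\textbf{Step 1 (admissibility).} Fix a linear isometry $\phi:L_r^\perp\to\mathbb R^{d}$ and set $\pi=\phi\circ\pi_{L_r}:\mathbb R^n\to\mathbb R^d$, which is linear.

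The subspace $L_r=\operatorname{lin}(v_1,\ldots,v_r)$ is spanned by lattice vectors. Hence $\Lambda\cap L_r$ is an $r$-dimensional sublattice of $\Lambda$. By the standard fact that a basis of $\Lambda\cap L_r$ extends to a basis $v'_1,\ldots,v'_r,w_1,\ldots,w_d$ of $\Lambda$, we get
$$\pi_{L_r}(\Lambda)=\Big\{\textstyle\sum_j x_j\pi_{L_r}(w_j): x_j\in\mathbb Z\Big\}.$$
The vectors $\pi_{L_r}(w_1),\ldots,\pi_{L_r}(w_d)$ are linearly independent: any dependence would put a nonzero integer combination of the $w_j$ into $L_r$, contradicting the basis property.

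Therefore $\Lambda_{L_r}$ is a full-rank lattice in $L_r^\perp$, and $\pi(\Lambda)=\phi(\Lambda_{L_r})$ is a full-rank lattice in $\mathbb R^d$. So $\pi$ is admissible. This step is the only real obstacle, namely ensuring that the projected lattice is discrete; it is exactly where we use that $L_r$ is spanned by lattice vectors.

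\textbf{Step 2 (comparison).} Since $\phi$ is a linear bijection, $\mu$ is invariant under it:
$$\mu(\pi(K),\pi(\Lambda))=\mu(K_{L_r},\Lambda_{L_r})=\rho_{L_r}.$$
Indeed, $tK_{L_r}+\Lambda_{L_r}=L_r^\perp$ if and only if $t\phi(K_{L_r})+\phi(\Lambda_{L_r})=\mathbb R^d$. Taking the maximum over admissible projections gives $\rho_{L_r}\le\mu_{n-r}(K,\Lambda)$.

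\textbf{Step 3 (conclusion).} All successive minima are positive, and $t\mapsto t^{n-r}$ is nondecreasing on $[0,\infty)$. Therefore
$$\Big(\prod_{i=1}^r\lambda_i(K,\Lambda)\Big)\mu_{n-r}(K,\Lambda)^{n-r}\ \ge\ \Big(\prod_{i=1}^r\lambda_i(K,\Lambda)\Big)\rho_{L_r}^{\,n-r}.$$
By \eqref{theorem_1_eq} in Theorem~\ref{thm:main}, the right-hand side is at least $\frac{2^r(n-r)!}{n!}\frac{\det(\Lambda)}{\operatorname{vol}_n(K)}$, which proves the corollary.
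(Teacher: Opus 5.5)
Your proposal is correct and matches the paper's argument: $\pi_{L_r}$, identified with a map to $\mathbb R^{n-r}$, is one of the admissible projections, so $\rho_{L_r}\le\mu_{n-r}(K,\Lambda)$, and the result follows from Theorem~\ref{thm:main} by monotonicity of $t\mapsto t^{n-r}$; your Steps 1--2 just make explicit the admissibility and isometry-invariance checks the paper leaves implicit. One wording fix in Step 1: a real linear dependence among the $\pi_{L_r}(w_j)$ gives a \emph{real} (not integer) combination of the $w_j$ lying in $L_r$, which is still impossible because $v'_1,\ldots,v'_r,w_1,\ldots,w_d$ are linearly independent over $\mathbb R$.
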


Note that if $r=0$, then Corollary~\ref{cor:covering-radius} is precisely the usual covering-density lower bound. If $r=n$, then Theorem~\ref{thm:main} is precisely the lower bound in Minkowski's second theorem on successive minima.


\section{Proof of Theorem~\ref{thm:main}}

\begin{proof}
Fix $0\le r\le n$. In order to simplify notation slightly, throughout this proof we let
$$
P_r=\prod_{i=1}^r \lambda_i(K,\Lambda),
$$
and write 
$$
L=L_r=\operatorname{lin}(v_1, \ldots, v_r) \subset \mathbb R^n.
$$ 
The cases $r=0$ and $r=n$ are included in the argument below, following the usual zero-dimensional conventions.

Let $\Gamma=\Lambda\cap L$. Since $v_1,\ldots,v_r\in\Gamma$ and these vectors are linearly independent by construction, $\Gamma$ is a lattice of rank $r$ in $L$. Moreover, for each $1\le i\le r$, the vectors $v_1,\ldots,v_i$ are linearly independent. Recalling \eqref{ordering_successive},
we have
$$
v_1,\ldots,v_i\in \lambda_i(K,\Lambda)K, 
$$
and since these vectors also lie in $L$, we obtain
$$
v_1,\ldots,v_i\in \lambda_i(K,\Lambda)(K\cap L).
$$
Thus, $\lambda_i(K,\Lambda)(K\cap L)$ contains $i$ linearly independent vectors of $\Gamma$. Hence, by the definition of the successive minima of $K\cap L$ with respect to $\Gamma$, it follows that
\begin{equation} \label{successive_minima_restrictive_ordering}
\lambda_i(K\cap L,\Gamma)\le \lambda_i(K,\Lambda)
\end{equation}
for each $1 \le i \le r$.

Using the lower bound in Minkowski's second theorem on successive minima \eqref{minkowski_lower}, applied within the $r$-dimensional subspace $L$, gives
\begin{equation} \label{minkowski_restricted}
\prod_{i=1}^r \lambda_i(K\cap L,\Gamma)
\ge
\frac{2^r}{r!}
\frac{\det(\Gamma)}{\operatorname{vol}_r(K\cap L)}.
\end{equation}
Recalling that $P_r=\prod_{i=1}^r \lambda_i(K,\Lambda)$ and using the inequality \eqref{successive_minima_restrictive_ordering}, we deduce from \eqref{minkowski_restricted} that
\begin{equation}\label{eq:section-det-bound}
\det(\Gamma)
\le
\frac{r!}{2^r}
\operatorname{vol}_r(K\cap L)P_r .
\end{equation}

Consider the projected convex body $K_L=\pi_L(K)$ and the quotient lattice $\Lambda_L=\pi_L(\Lambda)$ in $L^\perp$. Using the elementary covering-density lower bound \eqref{cover_density_lower}, applied within the $(n-r)$-dimensional subspace $L^\perp$, we have 
\begin{equation}\label{eq:quotient-covering-bound}
\rho_L^{\,n-r}\operatorname{vol}_{n-r}(K_L)
\ge
\det(\Lambda_L),
\end{equation}
where we recall that $\rho_L = \mu(K_L, \Lambda_L)$ is the covering radius of $K_L$ with respect to $\Lambda_L$.

Noting that $L$ is spanned by $r$ linearly independent lattice vectors, it follows that $\Gamma=\Lambda\cap L$ is a full-rank lattice in the $r$-dimensional subspace $L$. The standard determinant factorisation for a lattice and its orthogonal projection (see e.g. \cite[Proposition~1.9.7]{martinet2003perfect}) gives
\begin{equation}\label{eq:det-factorisation}
\det(\Lambda)
=
\det(\Gamma)\det(\Lambda_L). 
\end{equation}

Upon combining the inequalities \eqref{eq:section-det-bound} and \eqref{eq:quotient-covering-bound}, with the determinant factorisation \eqref{eq:det-factorisation}, we obtain
\begin{equation} \label{volume_product_bound}
\det(\Lambda)
\le
\frac{r!}{2^r}
P_r\rho_L^{\,n-r}
\operatorname{vol}_r(K\cap L)\operatorname{vol}_{n-r}(K_L).
\end{equation}

Finally, by the Rogers--Shephard section-projection inequality \cite[Theorem~1]{rogers1958convex}, we have
$$
\operatorname{vol}_n(K)
\ge
\frac{r!(n-r)!}{n!}
\operatorname{vol}_r(K\cap L)\operatorname{vol}_{n-r}(K_L),
$$
or equivalently
$$
\operatorname{vol}_r(K\cap L)\operatorname{vol}_{n-r}(K_L)
\le
\frac{n!}{r!(n-r)!}\operatorname{vol}_n(K)
=
\binom{n}{r}\operatorname{vol}_n(K).
$$

Upon substituting this upper bound into \eqref{volume_product_bound}, we obtain
$$
\begin{aligned}
\det(\Lambda)
&\le
    \frac{r!}{2^r}
    P_r\rho_L^{\,n-r}
    \binom{n}{r}\operatorname{vol}_n(K) \\
&= \frac{r!}{2^r}
    P_r\rho_L^{\,n-r} \frac{n!}{r!(n-r)!}\operatorname{vol}_n(K) \\
&= {P_r\rho_L^{\,n-r}}
     \frac{n!}{2^r(n-r)!}\operatorname{vol}_n(K), 
\end{aligned}
$$
which rearranges to \eqref{theorem_1_eq} as required, and concludes the proof of Theorem~\ref{thm:main}.
\end{proof}


\section{Sharpness and Comparison}

\subsection{Endpoint Sharpness}
The endpoint cases are sharp. If $r=0$, then equality holds in \eqref{theorem_1_eq} for lattice tilings. For instance, take $K=[-\frac12,\frac12]^n$ with $\Lambda=\mathbb Z^n$, which yields $\mu(K,\Lambda)=1$, $\operatorname{vol}_n(K)=1$, and $\det(\Lambda)=1$.

If instead $r=n$, then equality holds in \eqref{theorem_1_eq} for the cross-polytope
$$
\begin{aligned}
K&=\operatorname{conv}\{\pm \boldsymbol e_1,\ldots,\pm \boldsymbol e_n\} \\
   &= \{\boldsymbol x\in\mathbb R^n:\|\boldsymbol x\|_1\le 1\}, 
\end{aligned}
$$
with $\Lambda=\mathbb Z^n$, where $\operatorname{conv}(A)$ denotes the convex hull of $A$, namely the intersection of all convex sets in $\mathbb R^n$ containing $A$. In particular, for this choice, we have 
$$
\lambda_1(K,\Lambda)=\cdots=\lambda_n(K,\Lambda)=1,
\qquad
\operatorname{vol}_n(K)=\frac{2^n}{n!},
\qquad
\det(\Lambda)=1.
$$
For completeness, the volume is obtained as follows. The body $K$ is the union of $2^n$ congruent simplices, one in each orthant. In the positive orthant, this simplex is
$$
\{\boldsymbol x\in\mathbb R_{\ge 0}^n:x_1+\cdots+x_n\le 1\},
$$
which has vertices $\{0, \boldsymbol e_1, \ldots, \boldsymbol e_n\}$ and volume $\frac{1}{n!}$. Because there are $2^n$ orthants in $\mathbb{R}^n$, it follows that the volume of $K$ is $\frac{2^n}{n!}$.


\subsection{A Sharp Intermediate Family}
Theorem~\ref{thm:main} is also sharp for intermediate values of $r$. Let $0<r<n$ and $T\ge 1$, and consider the body
$$
J_{n,r,T}
=
\left\{(\boldsymbol x, \boldsymbol y)\in\mathbb R^r\times\mathbb R^{n-r}:
\frac{\|\boldsymbol x\|_1}{T}+\|\boldsymbol y\|_\infty\le 1
\right\}
$$
with $\Lambda=\mathbb Z^n$ and
$L=\mathbb R^r\times\{\boldsymbol 0\}$.
The case $n=3$, $r=1$ and $T=3$ is illustrated in Figure~\ref{fig:J313}.
Notice that the sections of $J_{n,r,T}$ in the first $r$ coordinates are $r$-dimensional $\ell_1$-balls with radii decreasing linearly with $\|\boldsymbol y\|_\infty$, while its orthogonal projection onto 
$L^\perp=\{\boldsymbol 0\}\times\mathbb R^{n-r}$
corresponds with the cube $[-1,1]^{n-r}$.

\begin{figure}[ht!]
\centering
\includegraphics[width=0.65\textwidth]{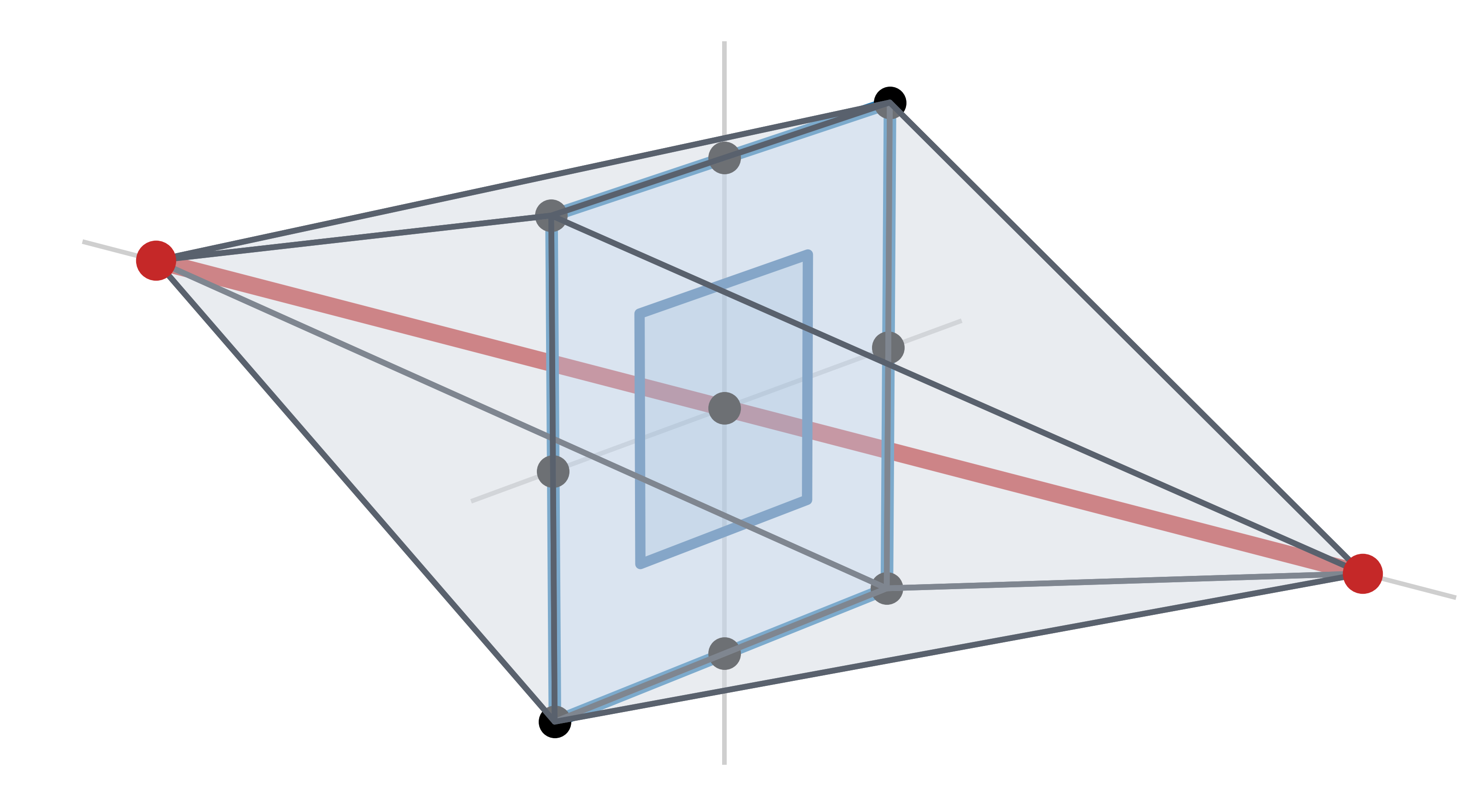}
\caption{This figure illustrates the convex body $J_{3,1,3}$. The red segment indicates the subspace $L$, the blue square is the projection $K_L=\pi_L (J_{3,1,3})$, the smaller (central) square is $\frac12K_L$, and the black points indicate the quotient lattice $\Lambda_L=\pi_L(\Lambda)$.}
\label{fig:J313}
\end{figure}

For this choice, the first $r$ successive minima are
$$
\lambda_1(J_{n,r,T},\mathbb Z^n)=\cdots=
\lambda_r(J_{n,r,T},\mathbb Z^n)=\frac1T.
$$
This follows since the standard/elementary basis vectors
$\boldsymbol e_1,\ldots,\boldsymbol e_r$ all belong to $(1/T)J_{n,r,T}$, and these $r$ vectors are linearly independent. Notice also that if $\lambda<1/T$, then the set $\lambda \, J_{n,r,T}$ contains no nonzero lattice points. Thus, the first $r$ successive minima are exactly $1/T$ in this case. 

The projection of $J_{n,r,T}$ onto
$L^\perp=\{\boldsymbol 0\}\times\mathbb R^{n-r}$ is
$$
K_L=\{\boldsymbol 0\}\times[-1,1]^{n-r},
$$
which corresponds with the cube $[-1,1]^{n-r}$ in $\mathbb R^{n-r}$. 
The projected lattice is
$$
\Lambda_L=\{\boldsymbol 0\}\times\mathbb Z^{n-r},
$$
which corresponds with $\mathbb Z^{n-r}$. Thus, it follows that
$$
\rho_L
=
\mu(K_L,\Lambda_L)
=
\mu([-1,1]^{n-r},\mathbb Z^{n-r})
=
\frac12.
$$

We next compute the volume of $J_{n,r,T}$. Note that for fixed $\boldsymbol y\in[-1,1]^{n-r}$, the corresponding section in the $\boldsymbol x$-variables is
$$
\left\{
\boldsymbol x\in\mathbb R^r:
\|\boldsymbol x\|_1\le T(1-\|\boldsymbol y\|_\infty)
\right\},
$$
which is an $r$-dimensional cross-polytope of volume 
$$
\frac{2^r}{r!}\bigl(T(1-\|y\|_\infty)\bigr)^r.
$$

Therefore, by Fubini's theorem (see e.g. \cite[Chapter 4]{weir1973lebesgue}), we have
$$
\operatorname{vol}_n(J_{n,r,T})
=
\int_{[-1,1]^{n-r}}
\frac{2^r}{r!}\bigl(T(1-\|\boldsymbol y\|_\infty)\bigr)^r\,d\boldsymbol y.
$$
The remaining integral satisfies
$$
\int_{[-1,1]^{n-r}}(1-\|\boldsymbol y\|_\infty)^r\,d\boldsymbol y
=
\frac{2^{n-r}(n-r)! \, r!}{n!},
$$
which follows by integrating over the level sets of $\|\boldsymbol y\|_\infty$.
Thus, it follows that
$$
\operatorname{vol}_n(J_{n,r,T})
=
\frac{2^rT^r}{r!}
\cdot
\frac{2^{n-r}(n-r)!r!}{n!}
=
\frac{2^nT^r(n-r)!}{n!}.
$$

Upon combining the values of the first $r$ successive minima with $\rho_L=1/2$, the left-hand side of \eqref{theorem_1_eq} is
$$
\left(\prod_{i=1}^r\lambda_i(J_{n,r,T},\mathbb Z^n)\right)
\rho_L^{\,n-r}
=
\left(\frac1T\right)^r
\left(\frac12\right)^{n-r}
=
\frac{1}{T^r2^{n-r}}.
$$
Recall that $\det(\mathbb Z^n)=1$, and notice that the right-hand side of \eqref{theorem_1_eq} is
$$
\frac{2^r(n-r)!}{n!}
\frac{\det(\mathbb Z^n)}{\operatorname{vol}_n(J_{n,r,T})}
=
\frac{2^r(n-r)!}{n!}
\frac{1}{{2^nT^r(n-r)!}/{n!}}
=
\frac{1}{T^r2^{n-r}},
$$
which shows that equality holds in Theorem~\ref{thm:main} for the family $J_{n,r,T}$ as claimed. 


\subsection{The Covering-Radius Form}
The above demonstrates that Theorem~\ref{thm:main} is sharp for $J_{n,r,T}$ whenever $0 < r < n$ and $T\ge1$. The corresponding covering-radius inequality, Corollary~\ref{cor:covering-radius}, is not sharp for fixed finite $T$, but becomes asymptotically sharp on this family in the limit $T\to\infty$.

Recall that
$$
J_{n,r,T}
=
\left\{(\boldsymbol x,\boldsymbol y)\in\mathbb R^r\times\mathbb R^{n-r}:
\frac{\|\boldsymbol x\|_1}{T}+\|\boldsymbol y\|_\infty\le 1
\right\}.
$$
The gauge associated with $J_{n,r,T}$ is
$$
\begin{aligned}
\|(\boldsymbol x,\boldsymbol y)\|_{J_{n,r,T}}
    &= \inf\{\lambda>0:(\boldsymbol x,\boldsymbol y)\in \lambda J_{n,r,T}\} \\
    &= \frac{\|\boldsymbol x\|_1}{T}+\|\boldsymbol y\|_\infty.
\end{aligned}
$$
It should be noted that this follows since 
$(\boldsymbol x,\boldsymbol y)\in \lambda J_{n,r,T}$ if and only if
$$
\frac{\|\boldsymbol x\|_1}{T}+\|\boldsymbol y\|_\infty\le \lambda,
$$
and hence the infimum is equal to
$$
\frac{\|\boldsymbol x\|_1}{T}+\|\boldsymbol y\|_\infty.
$$

We next compute the covering radius of $J_{n,r,T}$ with respect to $\mathbb Z^n$.  Recall that 
every point of $\mathbb R^n$ is congruent modulo $\mathbb Z^n$ to a point in the $o$-symmetric fundamental cube
$$
\left[-\frac 12, \frac 12\right]^n = \left[- \frac 12, \frac12 \right]^r\times\left[- \frac 12, \frac12\right]^{n-r}.
$$
It follows that the covering radius is obtained by maximising the $J_{n,r,T}$-gauge over this cube, namely 
$$
\mu(J_{n,r,T},\mathbb Z^n)
=
\max_{(\boldsymbol x,\boldsymbol y)\in\left[-\frac12,\frac12\right]^r\times\left[-\frac12,\frac12\right]^{n-r}}
\left(
\frac{\|\boldsymbol x\|_1}{T}+\|\boldsymbol y\|_\infty
\right).
$$

We now evaluate this maximum. For $\boldsymbol x\in[-\frac12,\frac12]^r$, we have
$$
\|\boldsymbol x\|_1\le \frac r2,
$$
with equality when $|x_1|=\cdots=|x_r|=\frac12$. For $\boldsymbol y\in[-\frac12,\frac12]^{n-r}$, we similarly have
$$
\|\boldsymbol y\|_\infty\le \frac12.
$$
It follows that 
$$
\begin{aligned}
\mu(J_{n,r,T},\mathbb Z^n)
    &= \frac{1}{T}\cdot\frac r2+\frac12 \\
    &=\frac12\left(1+\frac rT\right).
\end{aligned}
$$

It remains to compare the two sides of \eqref{corlloary_covering_radius_form} for this family. Recall that
$$
\lambda_1(J_{n,r,T},\mathbb Z^n)
=
\cdots
=
\lambda_r(J_{n,r,T},\mathbb Z^n)
=
\frac1T
\qquad 
\text{and} 
\qquad 
\operatorname{vol}_n(J_{n,r,T})
=
\frac{2^nT^r(n-r)!}{n!}.
$$
Since $\det(\mathbb Z^n)=1$, the right-hand side of \eqref{corlloary_covering_radius_form} is
$$
\frac{2^r(n-r)!}{n!}
\frac{1}{\operatorname{vol}_n(J_{n,r,T})}
=
\frac{1}{T^r2^{n-r}}.
$$
The left-hand side of \eqref{corlloary_covering_radius_form} is
$$
\left(\prod_{i=1}^r\lambda_i(J_{n,r,T},\mathbb Z^n)\right)
\mu(J_{n,r,T},\mathbb Z^n)^{n-r}
=
\left(\frac1T\right)^r
\left(\frac12\left(1+\frac rT\right)\right)^{n-r}.
$$
It follows that the ratio between the left-hand side and the right-hand side is
$$
\frac{
\left(\frac1T\right)^r
\left(\frac12\left(1+\frac rT\right)\right)^{n-r}
}{
{1}/{(T^r2^{n-r})}
}
=
\left(1+\frac rT\right)^{n-r},
$$
which tends to $1$ as $T\to\infty$. Thus, Corollary~\ref{cor:covering-radius} is asymptotically sharp on the family $J_{n,r,T}$ as claimed. 


\subsection{Comparison with the Jarník--Minkowski Estimate}
We also compare Corollary~\ref{cor:covering-radius} with what one obtains directly from Jarník's transference inequality and the lower bound in Minkowski's second theorem \eqref{minkowski_lower}. Fix $0\le r\le n$. Notice that the product
$$
\prod_{i=1}^r\lambda_i(K,\Lambda)
$$
already contains the first $r$ successive minima, so it remains to control the remaining factors
$$
\lambda_{r+1}(K,\Lambda),\ldots,\lambda_n(K,\Lambda)
$$
in terms of the covering radius.

Jarník's transference inequality (see e.g. \cite[Theorem~23.4]{gruber2007convex}) gives
$$
\mu(K,\Lambda)\ge \frac12\lambda_n(K,\Lambda).
$$
Recalling the ordering of the successive minima \eqref{ordering_successive}, we obtain
$$
\mu(K,\Lambda)\ge \frac12\lambda_i(K,\Lambda)
\qquad \text{for } i=r+1,\ldots,n.
$$
Upon multiplying over $i=r+1,\ldots,n$, we obtain
$$
\mu(K,\Lambda)^{n-r}
\ge
\frac{1}{2^{n-r}}
\prod_{i=r+1}^n\lambda_i(K,\Lambda), 
$$
and then multiplication by $\prod_{i=1}^r\lambda_i(K,\Lambda)$ gives
$$
\left(\prod_{i=1}^r\lambda_i(K,\Lambda)\right)
\mu(K,\Lambda)^{n-r}
\ge
\frac{1}{2^{n-r}}
\prod_{i=1}^n\lambda_i(K,\Lambda).
$$
Finally, using Minkowski's second theorem \eqref{minkowski_lower} yields
$$
\left(\prod_{i=1}^r\lambda_i(K,\Lambda)\right)
\mu(K,\Lambda)^{n-r}
\ge
\frac{1}{2^{n-r}}
\frac{2^n}{n!}
\frac{\det(\Lambda)}{\operatorname{vol}_n(K)}
=
\frac{2^r}{n!}
\frac{\det(\Lambda)}{\operatorname{vol}_n(K)}.
$$

Note that Corollary~\ref{cor:covering-radius} gives the stronger constant 
$$
\frac{2^r(n-r)!}{n!}
$$
on the right-hand side. Thus, Corollary~\ref{cor:covering-radius} improves the direct Jarník--Minkowski estimate by the factor $(n-r)!$, which is nontrivial when $n-r\ge 2$.


\bibliographystyle{plain}
\bibliography{references}

\end{document}